\begin{document}

\newtheorem{thm}{Theorem}[section]
\newtheorem{lem}[thm]{Lemma}
\newtheorem{prop}[thm]{Proposition}
\newtheorem{cor}[thm]{Corollary}
\newtheorem{con}[thm]{Conjecture}
\newtheorem{claim}[thm]{Claim}
\newtheorem{obs}[thm]{Observation}
\newtheorem{ques}[thm]{Question}
\theoremstyle{definition}
\newtheorem{defn}[thm]{Definition}
\newtheorem{example}[thm]{Example}
\def\dfc{\mathrm{def}}
\def\cF{{\cal F}}
\def\cH{{\cal H}}
\def\cT{{\cal T}}
\def\cM{{\cal M}}
\def\cA{{\cal A}}
\def\cB{{\cal B}}
\def\cG{{\cal G}}
\def\ap{\alpha'}
\def\nul{\varnothing} 
\def\st{\colon\,}   
\def\MAP#1#2#3{#1\colon\,#2\to#3}
\def\VEC#1#2#3{#1_{#2},\ldots,#1_{#3}}
\def\VECOP#1#2#3#4{#1_{#2}#4\cdots #4 #1_{#3}}
\def\SE#1#2#3{\sum_{#1=#2}^{#3}}  \def\SGE#1#2{\sum_{#1\ge#2}}
\def\PE#1#2#3{\prod_{#1=#2}^{#3}} \def\PGE#1#2{\prod_{#1\ge#2}}
\def\UE#1#2#3{\bigcup_{#1=#2}^{#3}}
\def\UM#1#2{\bigcup_{#1\in #2}}
\def\FR#1#2{\frac{#1}{#2}}
\def\FL#1{\left\lfloor{#1}\right\rfloor} 
\def\CL#1{\left\lceil{#1}\right\rceil}  
\def\CH#1#2{\binom{#1}{#2}}  
\def\xr{\chi_r}
\def\ch{{\rm ch}}
\def\diam{{\rm diam}}
\def\ZZ{{\mathbb{Z}}}
\def\NN{{\mathbb{N}}}
\def\e{{\rm e}}
\def\esub{\subseteq}
\def\C#1{\left|#1\right|}
\def\arraystretch{.8}
\def\Imp{\Rightarrow}

\title{On $r$-dynamic Coloring of Grids}
\author{Ross Kang\thanks{Applied Stochastics, IMAPP, Radboud University
Nijmegen, Netherlands, ross.kang@gmail.com.  Research supported by a Veni
grant from the Netherlands Organization for Scientific Research (NWO).}\,,
Tobias M\"uller\thanks{Department of Mathematics, University of Utrecht,
Utrecht, Netherlands, t.muller@uu.nl.  Work of the first two authors done
while visiting Zhejiang Normal University.}\,, and
Douglas B. West\thanks{Departments of Mathematics, Zhejiang Normal University
and University of Illinois, Urbana, IL, USA, west@math.uiuc.edu.  Research
supported by Recruitment Program of Foreign Experts,
1000 Talent Plan, State Administration of Foreign Experts Affairs, China.
}
}

\maketitle

\begin{abstract}
An \textit{$r$-dynamic $k$-coloring} of a graph $G$ is a proper $k$-coloring of
$G$ such that every vertex in $V(G)$ has neighbors in at least $\min\{d(v),r\}$
different color classes.  The \textit{$r$-dynamic chromatic number} of a graph
$G$, written $\chi_r(G)$, is the least $k$ such that $G$ has such a coloring.
Proving a conjecture of Jahanbekam, Kim, O, and West, we show that the
$m$-by-$n$ grid has no $3$-dynamic $4$-coloring when $mn\equiv2\mod 4$.  This
completes the determination of the $r$-dynamic chromatic number of the
$m$-by-$n$ grid for all $r,m,n$.
\end{abstract}

\baselineskip=16pt           

\section {Introduction}
When proper graph colorings represent assignment of vertices to categories, in
some applications it is desirable for vertices to have neighbors in many
categories.  This increases the number of colors needed.

An {\it $r$-dynamic $k$-coloring} is a proper $k$-coloring $f$ of $G$ such that
$|f(N(v))| \ge \min\{r,d(v)\}$ for each vertex $v$ in $V(G)$, where $N(v)$ is
the neighborhood of $v$ and $f(U) = \{f(v)\st v \in U\}$ for a vertex subset
$U$.  The {\it r-dynamic chromatic number}, introduced by Montgomery~\cite{M}
and written as $\chi_r(G)$, is the least $k$ such that $G$ has an $r$-dynamic
$k$-coloring. 

Note that $\chi_1(G)=\chi(G)$, where $\chi(G)$ is the ordinary chromatic number
of $G$.  Montgomery called the 2-dynamic chromatic number simply the
{\it dynamic chromatic number}.  Many results were motivated by Montgomery's
conjecture that $\chi_2(G)\le\chi(G)+2$ when $G$ is regular, which remains open.
Bounds on $\chi_r$ and further references for work on $\chi_r$ and its
variations appear in~\cite{JKOW}.

In this note we complete the solution of a problem in~\cite{JKOW}.  For
$p\in\NN$, let $[p]=\{1,\ldots,p\}$.  The {\it $m$-by-$n$ grid} $G_{m,n}$ is
the graph with vertex set $[m]\times [n]$ such that $(i,j)$ and $(i',j')$ are
adjacent if and only if $\C{i-i'}+\C{j-j'}=1$.  (In more general language,
$G_{m,n}$ is the cartesian product of paths with $m$ and $n$ vertices.) The
problem of computing $\chi_r(G_{m,n})$ for all $r,m,n$ was proposed
in~\cite{JKOW}.

The following observations are immediate from the definition.

\begin{obs}\label{triv}
$\chi_{r+1}(G) \ge \chi_r(G)$.
\end{obs}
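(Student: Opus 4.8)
The plan is to argue directly from the definition, with no machinery needed. The key inequality is that $\min\{r+1,d(v)\} \ge \min\{r,d(v)\}$ for every vertex $v$ of $G$: indeed, both sides equal $d(v)$ when $d(v)\le r$, and when $d(v)>r$ the left side is $\min\{r+1,d(v)\}\ge r$ while the right side is exactly $r$.

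Given this, I would take any $(r+1)$-dynamic $k$-coloring $f$ of $G$, where $k=\chi_{r+1}(G)$. By definition, $|f(N(v))|\ge\min\{r+1,d(v)\}\ge\min\{r,d(v)\}$ for every $v\in V(G)$, and $f$ is a proper $k$-coloring; hence $f$ is also an $r$-dynamic $k$-coloring of $G$. Since $\chi_r(G)$ is the least number of colors admitting an $r$-dynamic coloring, we get $\chi_r(G)\le k=\chi_{r+1}(G)$.

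There is no real obstacle here; the statement is a monotonicity observation whose entire content is the elementary inequality above. The only thing to be slightly careful about is the degree cap in the definition — it is precisely this cap that one must check does not break the containment of colorings, which it does not, by the two-case split on whether $d(v)\le r$.
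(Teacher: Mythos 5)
Your argument is correct and is precisely the reasoning the paper has in mind when it calls this observation ``immediate from the definition'': any $(r+1)$-dynamic coloring is an $r$-dynamic coloring because $\min\{r+1,d(v)\}\ge\min\{r,d(v)\}$. The paper gives no explicit proof, so your write-up simply spells out the intended one-line argument.
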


\begin{obs}\label{easy}
If $r\ge\Delta(G)$, then $\chi_r(G)=\chi_{\Delta(G)}(G)$.
\end{obs}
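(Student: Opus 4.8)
The plan is to show that, for any $r\ge\Delta(G)$, a proper $k$-coloring $f$ of $G$ satisfies the $r$-dynamic requirement if and only if it satisfies the $\Delta(G)$-dynamic requirement. Once this equivalence is in hand, the two families of valid colorings coincide for every $k$, and hence so do the least values of $k$ for which a valid coloring exists; this gives $\chi_r(G)=\chi_{\Delta(G)}(G)$ directly.

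First I would unpack the definition: the $r$-dynamic condition at a vertex $v$ is $|f(N(v))|\ge\min\{r,d(v)\}$, so the parameter $r$ enters the condition only through the quantity $\min\{r,d(v)\}$. Since $d(v)\le\Delta(G)$ for every $v\in V(G)$, whenever $r\ge\Delta(G)$ we have $\min\{r,d(v)\}=d(v)=\min\{\Delta(G),d(v)\}$. Thus the constraint imposed at each individual vertex is literally identical for every $r\ge\Delta(G)$, and in particular agrees with the constraint for $r=\Delta(G)$. Consequently the set of $r$-dynamic $k$-colorings of $G$ equals the set of $\Delta(G)$-dynamic $k$-colorings of $G$, for each fixed $k$.

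Taking the least $k$ for which this common set is nonempty yields the claimed equality. (Alternatively, the inequality $\chi_r(G)\ge\chi_{\Delta(G)}(G)$ comes for free from Observation~\ref{triv}, and one would then only need to argue the reverse inequality; but the observation above settles both directions at once.) I do not expect any genuine obstacle: the statement is immediate once the definition is examined, the entire content being that $\min\{r,d(v)\}$ has already stabilized at $d(v)$ by the time $r$ reaches $\Delta(G)$.
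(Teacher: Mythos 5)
Your argument is correct and is exactly the reason the paper labels this an observation ``immediate from the definition'': since $d(v)\le\Delta(G)$ for every vertex, $\min\{r,d(v)\}$ stabilizes at $d(v)$ once $r\ge\Delta(G)$, so the $r$-dynamic and $\Delta(G)$-dynamic conditions coincide vertex by vertex. Nothing further is needed.
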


\begin{obs}\label{easy2}
$\chi_r(G)\ge\min\{\Delta(G),r\}+1$.
\end{obs}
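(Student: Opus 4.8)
The plan is to read the bound directly off the local data at a vertex of maximum degree, exploiting the fact that the properness condition and the $r$-dynamic condition reinforce each other there. Fix a vertex $v$ with $d(v)=\Delta(G)$ and let $f$ be an arbitrary $r$-dynamic $k$-coloring of $G$. First I would observe that properness forces $f(v)\notin f(N(v))$, since $f(v)\ne f(u)$ for every $u\in N(v)$. Second, the $r$-dynamic condition applied at $v$ gives $|f(N(v))|\ge\min\{r,d(v)\}=\min\{r,\Delta(G)\}$.

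Combining these two observations, the number of colors appearing on the set $\{v\}\cup N(v)$ is exactly $|f(N(v))|+1\ge\min\{r,\Delta(G)\}+1$. Since every color used lies in the palette of size $k$, this yields $k\ge\min\{r,\Delta(G)\}+1$. As the inequality holds for every $r$-dynamic $k$-coloring of $G$, it holds in particular for $k=\chi_r(G)$, which is precisely the claimed bound.

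I do not anticipate any real obstacle: the statement is immediate from the definitions once the right vertex is chosen, and the only point meriting a moment's care is the degenerate case $\Delta(G)=0$ (equivalently, $G$ edgeless), where the right-hand side is $1$ and the inequality just records that a nonempty graph needs at least one color. Everything else is a one-line deduction.
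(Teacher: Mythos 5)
Your argument is correct and is exactly the standard deduction the paper has in mind: the paper states this as an observation ``immediate from the definition'' without writing out a proof, and your choice of a maximum-degree vertex $v$, combining properness ($f(v)\notin f(N(v))$) with the dynamic condition ($|f(N(v))|\ge\min\{r,\Delta(G)\}$), is precisely that intended one-line argument.
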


To avoid trivialities, assume $m,n\ge2$.  Akbari, Ghanbari, and
Jahanbekam~\cite{ACJ2} proved $\chi_2(G_{m,n})=4$.  Jahanbekam, Kim, O, and
West~\cite{JKOW} then determined most of the other values.  Since
$\Delta(G_{m,n})\le 4$, by Observation~\ref{easy} we need only consider $r\le4$.

\begin{thm}[\cite{JKOW}]\label{oldvalue}
If $m$ and $n$ are at least 2, then 
$$
\chi_4(G_{m,n}) = \begin{cases} 4 & \text{if}~\min\{m,n\}=2\\
                                    5 & \text{otherwise} \end{cases}
~\text{and}~
\chi_3(G_{m,n}) = \begin{cases} 4 & \text{if}~\min\{m,n\}=2 \\
                           4 & \text{if}~m~\text{and}~n~\text{are both even.}\\ 
     5 & \text{if}~m,n~\text{not both even}~\text{and}~mn\not\equiv2\!\!\!\mod4
\end{cases}
$$
\end{thm}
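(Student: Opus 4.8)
The plan is to treat lower and upper bounds separately and reduce the many cases to a few constructions. The easy lower bounds are immediate: since $\Delta(G_{m,n})=4$ when $\min\{m,n\}\ge3$, Observation~\ref{easy2} gives $\chi_4(G_{m,n})\ge5$ and $\chi_3(G_{m,n})\ge4$; when $\min\{m,n\}=2$ we have $\Delta=3$, so Observation~\ref{easy} gives $\chi_4=\chi_3$, and $\chi_3\ge4$ again follows from Observation~\ref{easy2} (or from $\chi_3\ge\chi_2=4$ via Observation~\ref{triv} and the result of Akbari, Ghanbari, and Jahanbekam). Thus the theorem reduces to three upper-bound constructions and one hard lower bound.

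For the upper bounds I would exhibit explicit colorings. The cleanest is $f(i,j)=(i+2j)\bmod5$: adjacent vertices differ by $1$ or $2$ modulo $5$, so $f$ is proper, and the four neighbors of an interior vertex receive the distinct values $f\pm1,f\pm2$, so $f$ is $4$-dynamic on every grid (every boundary vertex also sees distinct neighbors). This proves $\chi_4(G_{m,n})\le5$, and with Observation~\ref{triv} it gives $\chi_3(G_{m,n})\le5$, settling every case whose claimed value is $5$. For $\min\{m,n\}=2$ I would give a period-$4$ coloring of the ladder (columns cycling through the pairs $(1,2),(3,4),(2,1),(4,3)$) and check the degree-$3$ and corner conditions, establishing $\chi_3(G_{2,n})\le4$ and hence, by Observation~\ref{easy}, $\chi_4(G_{2,n})=4$. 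For ``$m,n$ both even'', encode colors in $\ZZ_2^2$ and set $f_1(i,j)\equiv \FL{(j-1)/2}+(i-1)$ and $f_2(i,j)\equiv (j-1)+\FL{(i-1)/2}$ modulo $2$. Horizontal neighbors always differ in $f_2$ and vertical neighbors in $f_1$, so $f$ is proper; the up/down pair always differs in $f_2$ and the left/right pair in $f_1$, and these two pairs are never equal as sets, so each interior vertex sees at least three colors. The only conditions that can fail are at the outer row and column, and a short computation shows they hold precisely because $m$ and $n$ are even; this proves $\chi_3(G_{m,n})\le4$ when $m,n$ are both even.

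The heart of the theorem is the lower bound $\chi_3(G_{m,n})\ge5$ when $\min\{m,n\}\ge3$, $m,n$ are not both even, and $mn\not\equiv2\pmod4$; equivalently, no $3$-dynamic $4$-coloring exists. Suppose one did, with colors in $\ZZ_2^2$. The key local fact is that a proper $4$-coloring in which an interior vertex sees at least three colors must have its four neighbors realize all three colors other than its own. I would apply this first on the boundary: for a degree-$3$ vertex $(1,j)$ of the top row, its two horizontal neighbors and its down-neighbor are forced pairwise distinct, so $f(1,j-1)\ne f(1,j+1)$ and, since three distinct colors already occur, $f(2,j)$ is forced to be the fourth color $f(1,j-1)+f(1,j)+f(1,j+1)$. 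Hence the top row is a sequence in which every window of three entries is distinct, and row $2$ is determined by row $1$ on the interior columns; the same propagation applies from all four sides. Writing the top row via its consecutive differences $s_j=f(1,j)+f(1,j-1)\in\{(0,1),(1,0),(1,1)\}$, the window condition says $s_{j+1}\ne s_j$, and the induced second row simplifies to $f(2,j)=f(1,j)+(s_j+s_{j+1})$. Iterating this recurrence row by row, together with the matching constraints from the left and right columns, pins the coloring down up to bounded freedom and forces it to be globally periodic.

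The main obstacle is closing this argument: showing the forced periodic structure cannot wrap around the whole $m$-by-$n$ board consistently. I would extract a parity invariant from the recurrence --- tracking how the step sequence $s_j$ must realize the three difference types $(0,1),(1,0),(1,1)$ as one advances around the boundary and through the rows --- and show that consistency at the far row and far column imposes a congruence on $m$ and $n$ that fails exactly when $m,n$ are not both even. I expect this bookkeeping to be the delicate part, and I expect it to separate cleanly only when $mn\not\equiv2\pmod4$: in that range the count is forced, whereas the borderline case $mn\equiv2\pmod4$ leaves just enough slack that a finer argument is needed (the case the present paper must handle by hand). Once the congruence contradiction is established, $\chi_3(G_{m,n})\ge5$ follows, and combined with $\chi_3\le5$ from the construction above this yields $\chi_3(G_{m,n})=5$ in all the stated subcases, completing the theorem.
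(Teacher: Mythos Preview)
Your upper-bound constructions and the easy lower bounds are essentially those the paper records after the statement of Theorem~\ref{oldvalue}: the $5$-coloring $f(i,j)=i+2j\bmod5$, and a $4$-coloring built from a $4\times4$ block (your $\ZZ_2^2$ formula encodes exactly the block the paper displays). So that part is fine and matches the paper.

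The gap is in the hard lower bound, $\chi_3(G_{m,n})\ge5$ when $\min\{m,n\}\ge3$, one of $m,n$ is odd, and $mn\not\equiv2\pmod4$. Your plan --- encode in $\ZZ_2^2$, study the difference sequence $s_j$ along row~$1$, propagate a recurrence, and then ``extract a parity invariant'' --- is not carried out; you explicitly flag the bookkeeping as the delicate part and stop. More to the point, the constraint you derive from row~$1$ alone, namely $s_{j+1}\ne s_j$ with $s_j\in\{(0,1),(1,0),(1,1)\}$, does \emph{not} force period~$4$: many aperiodic sequences satisfy it. The period-$4$ structure only emerges once you combine the row and column border constraints at the corner, and you never make that step explicit. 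Without it, there is no invariant to extract.

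The paper's argument (its Lemma~\ref{ring}) is both simpler and complete. From the four distinct corner values $x_{1,1},x_{1,2},x_{2,1},x_{2,2}=a,b,c,d$, the degree-$3$ condition at $(1,2)$ forces $x_{1,3}$ to be the fourth color not in $\{x_{1,1},x_{1,2},x_{2,2}\}$, hence $x_{1,3}=c$; iterating gives the first two rows and the first two columns cycling with period~$4$ in the specific pattern $(a,b,c,d)/(c,d,a,b)$ and $(a,c,b,d)/(b,d,a,c)$. Running the same argument from the far corners and comparing the two determinations of the last two rows and columns immediately rules out $n\equiv0,1,3\pmod4$ when $m$ is odd. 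No recurrence or invariant is needed: once the period-$4$ cycling on the border is established, the congruence drops out by inspection. Replacing your difference-sequence scheme with this direct border analysis would close the gap.
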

The upper bounds in Theorem~\ref{oldvalue} are by explicit construction.
Setting $f(i,j)=i+2j\mod 5$ yields a $4$-dynamic $5$-coloring of $G_{m,n}$.
Constructions for $\chi_r(G_{m,n})\le 4$ are obtained by repeating (and
truncating when $m$ or $n$ is twice an odd number) the block below.
$$
\begin{array}{cccc} 0&1&2&3\\ 2&3&0&1\\ 1&0&3&2\\ 3&2&1&0 \end{array}
$$
The lower bounds in Theorem~\ref{oldvalue} are from Observation~\ref{easy2},
except when $m$ or $n$ is odd and the other is not twice an odd number.  In
that case, the lower bound follows from the discussion in Lemma~\ref{ring}.
A statement similar to Lemma~\ref{ring} is used in~\cite{JKOW} to prove
Theorem~\ref{oldvalue}.  We include a more explicit version of their
observations, because we will use them in proving our main result.  The
discussion also shows why the remaining case is harder, and it restricts the
configurations that need to be considered in that case.  For $mn\equiv2\mod4$
with $m,n\ge3$, the authors in~\cite{JKOW} proved that five colors suffice and
conjectured that five colors are needed.  The proof of this conjecture is the
content of this note.

\begin{thm}\label{main}
If $m,n\ge3$ and $mn\equiv2\mod4$, then $\chi_3(G_{m,n})=5$.
\end{thm}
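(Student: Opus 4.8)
The plan. Since $f(i,j)=i+2j\bmod 5$ is a $4$-dynamic (hence $3$-dynamic) $5$-coloring, Observation~\ref{triv} gives $\chi_3(G_{m,n})\le\chi_4(G_{m,n})\le 5$, so it suffices to show that $G_{m,n}$ has no $3$-dynamic $4$-coloring. Transposing if needed, assume $n\equiv 2\pmod 4$ (so $n\ge 6$) and $m$ odd (so $m\ge 3$), and suppose for contradiction that $f\colon [m]\times[n]\to\ZZ_4$ is a $3$-dynamic $4$-coloring. I first record what the definition forces pointwise: at an interior vertex $v$ the four neighbours carry all three colors other than $f(v)$, so exactly one color is repeated and it sits on exactly one pair of neighbours; at a degree-$3$ boundary vertex the three neighbours carry three distinct colors, all $\ne f(v)$; at a corner the two neighbours are colored differently.

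The core is a rigidity statement. Starting from Lemma~\ref{ring} and the discussion around it — which already settles the cases in which one of $m,n$ is odd and the other is not twice an odd number, and which also trims the list of configurations that survive in the remaining case — one shows that, up to a permutation of the four colors and up to transposition, $f$ agrees on the interior with the doubly-periodic coloring obtained by tiling with the displayed $4\times 4$ block. It is convenient to phrase this via the two ``parts'' $B=f^{-1}(\{0,3\})$ and $W=f^{-1}(\{1,2\})$ (after the relabelling that makes the block fit): every interior vertex has exactly two neighbours in its own part, those two carry the other color of that part and lie one horizontal and one vertical from $v$, and the remaining two neighbours lie in the opposite part and carry its two colors, one each. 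Equivalently, the subgraph induced by $B$, and likewise by $W$, is a disjoint union of cycles lying in the interior and paths ending on the boundary, each a lattice path that turns $90^\circ$ at every interior vertex it passes through. Establishing this — propagating the pointwise constraint from each interior vertex to its neighbours and ruling out every alternative placement of the repeated color, then pushing the pattern to the two outermost rows and columns — is the longest and most delicate part of the argument, and is where essentially all the work lies.

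Granting rigidity, the contradiction is short. On rows $2,\dots,m-1$ the coloring coincides with a fixed vertical translate of the periodic block, and applying the interior ($3$-dynamic) condition at the vertices of row~$2$ (and of row~$m-1$) forces rows~$1$ and $m$ to be the natural continuations of that block. A direct check of the displayed block shows, however, that a given row of it can occur as the top row of a $3$-dynamic coloring exactly for one parity of its block index and as the bottom row exactly for the other parity: for the wrong parity the vertical neighbour of some degree-$3$ vertex of that row repeats one of that vertex's two horizontal neighbours, killing the degree-$3$ condition. Since the block indices of rows~$1$ and $m$ differ by $m-1$, the coloring would require these two indices to have opposite parities, i.e.\ $m-1$ odd, i.e.\ $m$ even — contradicting that $m$ is odd. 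Hence no $3$-dynamic $4$-coloring exists and $\chi_3(G_{m,n})=5$.

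The obstacle is the rigidity step: converting the purely local degree conditions into the global brick description requires a careful finite case analysis, the delicate points being to eliminate ``mixed'' local configurations (an interior vertex with its repeated color on two colinear neighbours, or interacting the wrong way with a nearby boundary vertex) and to control the two outermost rows and columns, where the interior rigidity does not apply verbatim. The hypothesis $mn\equiv 2\pmod 4$ is exactly the case not already reachable by the slick ring argument of Lemma~\ref{ring}, so this structural detour is forced; within it, $n\equiv 2\pmod 4$ keeps the surviving configuration list short and $m$ odd supplies the closing parity contradiction.
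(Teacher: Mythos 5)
Your reduction to the nonexistence of a $3$-dynamic $4$-coloring, and the normalization $m$ odd, $n\equiv2\pmod4$, match the paper, and your closing parity computation is essentially the paper's opening observation in the proof of Theorem~\ref{main} (the colors cannot cycle down columns $3$ and $4$ because the bottom two rows, forced by Lemma~\ref{ring}, would come out wrong when $m$ is odd). But the argument has a genuine gap at its center: the ``rigidity statement'' --- that any $3$-dynamic $4$-coloring must agree on the interior with the doubly periodic tiling by the $4\times4$ block --- is asserted rather than proved, and it is not something local propagation can deliver. The paper's Lemmas~\ref{zigzag} and~\ref{extend} show exactly the opposite phenomenon: when the periodic pattern breaks, it breaks into a ``flipped'' configuration that is still locally consistent with all the degree conditions and can persist along an entire row or column. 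So the pointwise constraints you list (one repeated color among the four neighbors of an interior vertex, etc.)\ admit non-periodic solutions, and eliminating them is not a ``careful finite case analysis'' of local configurations --- it requires a global argument. Your two structural descriptions are also in tension: the ``union of $90^\circ$-turning lattice paths'' picture is strictly weaker than interior periodicity and does not imply that rows $2,\dots,m-1$ are translates of the block, which is what your endgame needs.

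The paper's actual route is to accept that the pattern must break (by your parity observation it cannot stay periodic down columns $3$--$4$, and by minimality of $m$ it cannot stay periodic across rows $3$--$4$ either), to classify the first breakdown as a pair of flipped positions (Lemma~\ref{extend}), and then to run an induction on $r+s$ over $(r,s)$-partial colorings (Lemma~\ref{partial}): the row-flip and the column-flip propagate diagonally inward, the first two rows and columns can be deleted to produce a smaller partial coloring, and the base case $r=s=2$ yields an immediate contradiction at position $(5,5)$. This induction --- together with the choice of $m$ as a minimal odd counterexample, which your proposal never invokes --- is the real content of the proof, and nothing in your outline substitutes for it. As written, the proposal proves only the easy half (periodic colorings are impossible) and leaves open the case that actually requires work (non-periodic ones).
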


\section{Preliminary Lemmas}\label{prodsec}

We henceforth assume $m,n\ge3$, with $m$ odd.  We represent a coloring of 
$G_{m,n}$ by a matrix $X$, with $x_{i,j}=f(i,j)$.  We use the four colors
$a,b,c,d$; their names may be permuted as needed, often invoked by saying
``by symmetry''.

In the statements of the lemmas, we choose $m$ to be a smallest odd integer
such that a $3$-dynamic $4$-coloring of $G_{m,n}$ exists for some $n$, and we
restrict the properties of such a coloring.  We say that a position {\it sees}
a color if it has a neighbor with that color; each position other than the four
corners must see three colors.  The {\it border vertices} are the vertices with
degree less than $4$.

The necessity of $n\equiv2\mod 4$ was obtained in \cite{JKOW}.  We give a more
explicit description of the coloring than they did, since we continue on to
obtain a contradiction.  As we have mentioned, the discussion in the proof of
Lemma~\ref{ring} is similar to \cite{JKOW}.

A portion of a row or column is {\it periodic} if vertices having the same
color are separated by a multiple of four positions.

\begin{lem}\label{ring}
The color sequences on the first two rows and first two columns are periodic.
Letting $a=x_{1,1}$, $b=x_{1,2}$, $c=x_{2,1}$, and $d=x_{2,2}$, the four colors
are distinct.  The cycle of colors is $(a,b,c,d)$ in the first row, $(c,d,a,b)$
in the second row, $(a,c,b,d)$ in the first column, and $(b,d,a,c)$ in the
second column.  Furthermore, $n\equiv2\mod4$ is necessary (given that $m$ is
odd), and columns $n-1$ and $n$ are copies of columns $1$ and $2$, respectively.
\end{lem}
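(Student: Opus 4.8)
Here is how I would approach the proof.

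The idea is to use the local conditions to force the colouring outward from each corner. Every non-corner vertex has degree at least $3$, hence must see $3$ colours; in particular, for a border vertex $(1,j)$ with $2\le j\le n-1$ the three neighbours $(1,j-1)$, $(1,j+1)$, $(2,j)$ receive three distinct colours, so any three consecutive entries of the first row (and likewise of the first column) are distinct. Put $a=x_{1,1}$, $b=x_{1,2}$, $c=x_{2,1}$, $d=x_{2,2}$; properness already gives $a\ne b$, $a\ne c$, $b\ne d$, $c\ne d$. To see $b\ne c$ I would assume $b=c$: then the requirements that $(1,2)$ and $(2,1)$ each see three colours force $x_{1,3}$ and $x_{3,1}$ to be the same colour $e\notin\{a,b\}$ and force $x_{2,2}$ to be the other colour outside $\{a,b\}$; but then the interior vertex $(2,2)$, whose neighbours $(1,2)$ and $(2,1)$ are both coloured $b$, sees three colours only if one of $x_{2,3},x_{3,2}$ equals $e$, contradicting properness on the edge to $(1,3)$ or $(3,1)$. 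Hence $a,b,c$ are distinct; since $x_{2,2}\notin\{b,c\}$ and $x_{2,2}\ne a$ (otherwise $(1,2)$ sees only $\{a,x_{1,3}\}$), the four colours are distinct and $d=x_{2,2}$. The condition at $(1,2)$ then forces $x_{1,3}=c$, and at $(2,1)$ forces $x_{3,1}=b$.

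Next I would propagate along the first row by induction: assuming $x_{1,1},\dots,x_{1,j}$ and $x_{2,1},\dots,x_{2,j-1}$ match the cycles $(a,b,c,d)$ and $(c,d,a,b)$, the colour $x_{2,j}$ must avoid the three distinct colours $x_{1,j-1}$ (from the condition at $(1,j)$), $x_{1,j}$ and $x_{2,j-1}$ (properness), so it is the next cycle entry; and then $x_{1,j+1}$ must avoid $x_{1,j-1}$, $x_{2,j}$, $x_{1,j}$, so it too is the next cycle entry. This makes the first row periodic with cycle $(a,b,c,d)$ (all $n$ positions) and the second row periodic with cycle $(c,d,a,b)$ on columns $1,\dots,n-1$; the corner condition at $(1,n)$—that its two neighbours differ—then pins down $x_{2,n}$ and completes the second row. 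Applying the identical argument to the transpose (a valid colouring of $G_{n,m}$) gives the first column periodic with cycle $(a,c,b,d)$ and the second column with cycle $(b,d,a,c)$.

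For the parity statement I would use the reflections of the colouring, each again a valid colouring of $G_{m,n}$. Flipping top-to-bottom and reapplying the row argument makes rows $m$ and $m-1$ periodic; the cycle of row $m$ is $(x_{m,1},x_{m,2},x_{m-1,1},x_{m-1,2})$, and reading these four entries off from the cycles of columns $1$ and $2$ and using that $m$ is odd gives the cycle $(a,b,d,c)$ if $m\equiv1\pmod4$ and $(b,a,c,d)$ if $m\equiv3\pmod4$ (with the $2$-shift for row $m-1$). Symmetrically, flipping left-to-right makes columns $n$ and $n-1$ periodic, with cycles read off from the cycles of rows $1$ and $2$ according to $n\bmod4$. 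Each of the four cells $x_{m,n}$, $x_{m,n-1}$, $x_{m-1,n}$, $x_{m-1,n-1}$ can now be evaluated both from its row's cycle and from its column's cycle, and the two answers must agree; a short check of the residues shows this fails unless $n\equiv2\pmod4$ (for $m\equiv3$ already the cell $x_{m,n}$ forces it; for $m\equiv1$ the cell $x_{m,n}$ kills $n\equiv0,3$ and the cell $x_{m-1,n-1}$ kills $n\equiv1$). Finally, when $n\equiv2\pmod4$ the cycles computed for columns $n$ and $n-1$ are exactly $(b,d,a,c)$ and $(a,c,b,d)$, i.e.\ the cycles of columns $2$ and $1$; since in each case both columns are periodic with the same cycle and the same entry in row $1$, they coincide, so columns $n-1,n$ are copies of columns $1,2$. (The minimality of $m$ is not actually needed here.)

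I expect the main difficulty to be the bookkeeping in the last step: one must take care to use the correctly reflected copy of the pattern when reading off the cycle of each of rows $m,m-1$ and columns $n,n-1$, and then run through every residue class of $n$. The earlier steps are routine, but the induction and the $b=c$ analysis still need attention to confirm that each colour claimed is genuinely \emph{forced}—that there really are three distinct colours excluded at the cell in question—rather than merely consistent.
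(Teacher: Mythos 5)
Your proposal is correct and follows essentially the same route as the paper: force the four distinct colors near each corner, propagate periodicity along the border using the ``sees three colors'' condition at noncorner border vertices, and then rule out $n\not\equiv2\bmod4$ by checking that the cycles forced on the last rows and last columns must agree where they meet. The only stylistic difference is your detour to prove $x_{1,2}\ne x_{2,1}$ via the vertex $(2,2)$; this follows immediately from the fact that the corner $(1,1)$ has degree $2$ and must see $\min\{2,3\}=2$ colors, so its two neighbors already receive distinct colors.
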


\begin{proof}
Since border vertices have degree at most $3$, the colors
$x_{1,1},x_{1,2},x_{2,1},x_{2,2}$ are distinct.  Each noncorner border vertex
must see three colors.  Repeatedly using this observation determines the first
two rows and first two columns as claimed.  Once the argument for the first two
rows or first two columns reaches their ends, the same argument determines the
last two columns and last two rows.

We have restricted $m$ to be odd.  The diagram below, in the two cases
$m\equiv1\mod4$ and $m\equiv3\mod4$, incorporates all the cases for $(m,n)$.
In the bottom row the first two elements agree with the top row when
$m\equiv1\mod4$ and reverse those two elements when $m\equiv3\mod4$.  By
symmetry, the last two columns must exhibit the same behavior.
$$
\arraycolsep=1.8pt
\begin{array}{cccccccccccccc}
a&b&c&d&a&b&c&d&a&b&c&d&a&b\\
c&d&a&b&c&d&a&b&c&d&a&b&c&d\\
b&a&~&~&~&~&~&~&~&~&~&~&b&a\\
d&c&a&b&d&c&a&b&d&c&a&b&d&c\\
a&b&d&c&a&b&d&c&a&b&d&c&a&b\\
~&~&~&1&1&~&4&4&~&3&3&~&2&2
\end{array}
\qquad
\begin{array}{cccccccccccccc}
a&b&c&d&a&b&c&d&a&b&c&d&a&b\\
c&d&a&b&c&d&a&b&c&d&a&b&c&d\\
b&a&~&~&~&~&~&~&~&~&~&~&b&a\\
d&c&~&~&~&~&~&~&~&~&~&~&d&c\\
a&b&~&~&~&~&~&~&~&~&~&~&a&b\\
c&d&b&a&c&d&b&a&c&d&b&a&c&d\\
b&a&c&d&b&a&c&d&b&a&c&d&b&a\\
~&~&~&1&1&~&4&4&~&3&3&~&2&2
\end{array}
$$

The numbers below the grid designate where the rows end when $n$ is congruent
to $1$, $4$, $3$, or $2$, respectively.  In the first three cases, the
relationship between the top row and bottom row is not what we have observed
is necessary for the last two columns, so the rows cannot end there.  Hence in
those congruence classes for $n$ no $3$-dynamic $4$-coloring exists.
\end{proof}

Thus we henceforth also assume $n\equiv2\mod4$, with $n\ge6$.

\begin{lem}\label{cycle}
In the upper left corner, $x_{3,2}=x_{2,3}$, and similarly in the other corners.
Furthermore, $m\ge7$.
\end{lem}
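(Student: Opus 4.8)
The plan is to read the colors at $(3,2)$ and $(2,3)$ directly off the cyclic patterns supplied by Lemma~\ref{ring}, and then to use these forced colors---together with one further forced color when $m=5$---to overload the neighborhood of the vertex $(3,3)$ in the two cases $m=3$ and $m=5$.

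For the corner identity I would simply evaluate the cycles of Lemma~\ref{ring}: the second column has cycle $(b,d,a,c)$, so $x_{3,2}$ is its third entry $a$; the second row has cycle $(c,d,a,b)$, so $x_{2,3}$ is its third entry $a$; hence $x_{3,2}=x_{2,3}$. For the other three corners the same computation applies after reflecting the grid, using that columns $n-1$ and $n$ are copies of columns $1$ and $2$ (and that $n\equiv2\mod4$) for the right side, and that rows $m-1$ and $m$ are determined, as in the proof of Lemma~\ref{ring}, for the bottom; in each case the two relevant positions are forced to a common color. This step is routine bookkeeping with the periodic patterns.

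For the bound $m\ge7$: since $m$ is odd it suffices to exclude $m=3$ and $m=5$, and in both cases I would examine the vertex $(3,3)$, which is not a corner because $n\ge6$. If $m=3$, then $(3,3)$ lies on the bottom border, hence has degree $3$ with neighborhood $\{(2,3),(3,2),(3,4)\}$; by the corner identity $x_{2,3}=x_{3,2}=a$, so $(3,3)$ sees at most two colors, contradicting that a degree-$3$ vertex must see three. If $m=5$, then $(3,3)$ is an interior vertex of degree $4$ with neighborhood $\{(2,3),(4,3),(3,2),(3,4)\}$; here row $4$ is row $m-1$, which (as $5\equiv1\mod4$) has cycle $(d,c,a,b)$ by the proof of Lemma~\ref{ring}, so $x_{4,3}=a$, while again $x_{2,3}=x_{3,2}=a$. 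Thus three of the four neighbors of $(3,3)$ are colored $a$, so $(3,3)$ sees at most two colors---again a contradiction. Hence $m\ge7$.

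The only point that is not completely mechanical is the $m=5$ case: one must notice that here the row directly below row $3$ is the already-determined row $m-1$, so $x_{4,3}$ is forced. This feature disappears for larger odd $m$, where row $4$ is a free interior row and the vertex $(3,3)$ is not over-constrained; spotting it is essentially the whole content of the bound $m\ge7$.
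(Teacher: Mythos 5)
Your proposal is correct and takes essentially the same route as the paper: the corner identity is read off the periodic patterns of Lemma~\ref{ring}, and $m\in\{3,5\}$ is excluded by showing that $(3,3)$ cannot see three colors. The paper states this in one sentence; your write-up supplies the details (in particular the observation that for $m=5$ the forced row $m-1$ gives $x_{4,3}=a$), and they check out.
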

\begin{proof}
The cycling of colors as observed in Lemma~\ref{ring} implies the first claim.
We then observe that when $m\in\{3,5\}$, position $(3,3)$ cannot see three
colors.
\end{proof}

Additional lemmas will restrict the coloring of the interior.

\begin{lem}\label{2by3}
Every $3$-by-$2$ or $2$-by-$3$ subgrid has vertices of all four colors.
\end{lem}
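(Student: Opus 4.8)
The plan is to assume that some $2$-by-$3$ subgrid uses at most three colors, say it avoids the color $d$, and to derive a contradiction; the $3$-by-$2$ case will follow afterwards by transposing. So fix such a subgrid on rows $i,i+1$ and columns $j,j+1,j+2$, with all six entries in $\{a,b,c\}$.

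The first step is to pin down the pattern of the subgrid. Each of its two rows is a proper coloring of a path on three vertices with colors from $\{a,b,c\}$, hence either has the form $pqp$ or is a permutation of $a,b,c$. I would rule out the form $pqp$ for row $i$ (and, symmetrically, for row $i+1$): if row $i$ is $pqp$, consider the position $(i,j+1)$, which is never a corner because its column index lies strictly between $1$ and $n$. This position has color $q$ and sees color $p$ twice inside the subgrid, so the $3$-dynamic condition forces its two remaining potential neighbors $(i-1,j+1)$ and $(i+1,j+1)$ to supply the other two colors; since the subgrid avoids $d$, this forces $x_{i+1,j+1}$ to be the third color $r$, and then propriety forces $x_{i+1,j}=x_{i+1,j+2}=q$. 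But now $(i+1,j+1)$ sees only color $q$ among its in-subgrid neighbors and at most one color beyond that, contradicting the $3$-dynamic condition there. Hence both rows are permutations of $a,b,c$; after renaming colors so that row $i$ reads $a,b,c$, propriety between the two rows forces row $i+1$ to be a derangement, so the subgrid is
$$
\begin{array}{ccc} a&b&c\\ b&c&a \end{array}
\qquad\text{or}\qquad
\begin{array}{ccc} a&b&c\\ c&a&b \end{array}.
$$

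The heart of the argument is ruling out the first of these two patterns; the second follows by reversing the column order and swapping the names $a$ and $c$. For the pattern on the left, the idea is that the $3$-dynamic condition, applied to the positions in its left half, forces the color $d$ into a cluster of surrounding positions in a self-contradictory way. The condition at $(i,j+1)$ and at $(i+1,j+1)$ (again non-corners) forces $x_{i-1,j+1}=x_{i+2,j+1}=d$ and also forces $2\le i\le m-2$. The condition at $(i,j)$, which sees color $b$ twice inside the subgrid, forces $\{x_{i-1,j},x_{i,j-1}\}=\{c,d\}$ and forces $j\ge 2$; since $x_{i-1,j}$ is adjacent to $x_{i-1,j+1}=d$, we get $x_{i-1,j}=c$ and hence $x_{i,j-1}=d$. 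Symmetrically, the condition at $(i+1,j)$ forces one of $x_{i+2,j}$ and $x_{i+1,j-1}$ to equal $d$, and since $x_{i+2,j}$ is adjacent to $x_{i+2,j+1}=d$ we obtain $x_{i+1,j-1}=d$. Then $x_{i,j-1}$ and $x_{i+1,j-1}$ are two adjacent positions both colored $d$, the desired contradiction.

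Finally, the $3$-by-$2$ statement follows because transposing a $3$-dynamic $4$-coloring of $G_{m,n}$ gives one of $G_{n,m}$, and nothing above used the parity of $m$ or $n$. I expect the main obstacle to be the bookkeeping when the subgrid sits on or near the border: one has to verify that in the interior case ($2\le i\le m-2$ and $2\le j\le n-2$) all six positions of the subgrid, together with the handful of surrounding positions used above, have degree $4$ with the asserted neighbors, and that every boundary placement of the subgrid is already excluded by the $3$-dynamic condition at one of its positions (the same analysis, but now with a missing neighbor so that the position cannot reach three colors at all). The border description from Lemma~\ref{ring} keeps this in check.
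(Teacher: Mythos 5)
Your proof is correct and follows essentially the same route as the paper: reduce to the canonical pattern $\begin{smallmatrix}a&b&c\\ b&c&a\end{smallmatrix}$, observe that the subgrid must avoid the border, and then force the fourth color onto the two adjacent positions $(i,j-1)$ and $(i+1,j-1)$ for a contradiction. The only (immaterial) difference is that the paper rules out the degenerate patterns by a counting argument on the bipartition (a color appearing three times gives some vertex three same-colored neighbors), whereas you enumerate the proper colorings of each row.
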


\begin{proof}
By symmetry, it suffices to consider a $2$-by-$3$ subgrid.  Suppose that some
$2$-by-$3$ subgrid contains at most three colors in a $3$-dynamic $4$-coloring
of $G_{m,n}$.  Since the $6$-vertex subgraph is bipartite, a color appearing
three times would give a vertex three neighbors with the same color, which is
forbidden since each vertex has degree at most $4$.  Hence each of the three
colors appears twice.  By symmetry, we may assume that it appears as follows.
$$
\begin{array}{ccc}
a&b&c\\
b&c&a\end{array}
$$

Since border vertices have degree at most $3$, such a subgrid cannot include a
border vertex.  Now the fourth color must appear above and below the middle,
but it also must appear adjacent to each corner of the rectangle, which puts it
on adjacent vertices.
\end{proof}

We next prove another completely local implication.

\begin{lem}\label{zigzag}
If $x_{i-1,j}=x_{i,j-1}$ and $x_{i,j}\ne x_{i-1,j+1}$, then
$x_{i+1,j}=x_{i-1,j+1}$ and $x_{i+1,j+1}=x_{i-1,j}$ (given $i<m$ and $j<n$).
The same implication holds with rows and columns exchanged.
\end{lem}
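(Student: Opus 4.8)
The plan is to obtain the first conclusion directly from the dynamic condition at $(i,j)$, and the second by contradiction, forcing the colour $x_{i-1,j}$ to reappear on an adjacent pair of vertices. I would treat only the stated orientation, since the version with rows and columns exchanged follows by transposing the colour matrix.

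First I would fix names: set $a=x_{i-1,j}=x_{i,j-1}$ and $b=x_{i,j}$, so $a\ne b$ by properness, and let $c,d$ be the other two colours. The hypothesis mentions row $i-1$ and column $j-1$, and $i<m$, $j<n$ are assumed, so $2\le i\le m-1$ and $2\le j\le n-1$; hence $(i,j)$ is interior and sees three colours. Two of its neighbours already carry $a$, so $x_{i+1,j}$ and $x_{i,j+1}$ are distinct colours, each $\ne a$ and (by properness with $(i,j)$) $\ne b$; thus $\{x_{i+1,j},x_{i,j+1}\}=\{c,d\}$, and, since the labels $c,d$ are still free, I may take $x_{i+1,j}=c$ and $x_{i,j+1}=d$. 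Then $x_{i-1,j+1}$ is adjacent to a vertex of colour $a$ and to one of colour $d$, and by hypothesis it is not $b$, so $x_{i-1,j+1}=c=x_{i+1,j}$. That is the first conclusion.

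For the second, I would suppose $x_{i+1,j+1}\ne a$. Properness with $(i+1,j)$ and $(i,j+1)$ excludes $c$ and $d$, so $x_{i+1,j+1}=b$. Two boundary cases are disposed of at once: if $j=n-1$ then $(i,j+1)$ is a non-corner border vertex whose neighbours carry only the colours $c,b,b$, violating the dynamic condition, and if $i=m-1$ then $(i+1,j)$ is a non-corner border vertex whose neighbours carry only $b$ and $x_{i+1,j-1}$, again too few. So assume $j\le n-2$ and $i\le m-2$, which makes $(i,j+1)$, $(i+1,j)$ and $(i+1,j+1)$ all interior. The dynamic condition at $(i,j+1)$ (neighbours $c,b,b,x_{i,j+2}$) forces $x_{i,j+2}=a$; the dynamic condition at $(i+1,j)$ (neighbours $b,b,x_{i+1,j-1},x_{i+2,j}$, the last two also $\ne c$) forces $\{x_{i+1,j-1},x_{i+2,j}\}=\{a,d\}$; and the dynamic condition at $(i+1,j+1)$ (neighbours $c,d,x_{i+2,j+1},x_{i+1,j+2}$, the last two $\ne b$) forces $a\in\{x_{i+2,j+1},x_{i+1,j+2}\}$. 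I finish by a short case split: if $x_{i+1,j-1}=a$ this clashes with $x_{i,j-1}=a$ on the adjacent pair $(i,j-1),(i+1,j-1)$; otherwise $x_{i+2,j}=a$, and then $x_{i+2,j+1}=a$ clashes with $x_{i+2,j}$, while $x_{i+1,j+2}=a$ clashes with $x_{i,j+2}=a$. Every branch ends with two copies of $a$ on adjacent vertices, contradicting properness, so $x_{i+1,j+1}=a=x_{i-1,j}$.

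The first conclusion is essentially free. The only real work is the second, and the point to get right there is bookkeeping rather than cleverness: correctly sorting out which of the handful of relevant vertices are interior (so the dynamic threshold is $3$) versus on the border, and noticing that the two boundary instances $j=n-1$ and $i=m-1$ genuinely behave differently and must be split off before the interior chase. Once that is done, every case collapses to the same trivial obstruction, a repeated colour $a$ on an adjacent pair, so the argument stays short.
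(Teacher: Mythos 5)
Your proof is correct and takes essentially the same approach as the paper's: the dynamic condition at $(i,j)$, which already has two neighbours coloured $a$, pins down the local configuration and gives the first conclusion, and the second is obtained by assuming the contrary and propagating forced occurrences of $a$ to $(i,j+2)$ and $(i+2,j)$ until the condition at $(i+1,j+1)$ cannot be satisfied. Your explicit treatment of the border cases $j=n-1$ and $i=m-1$ is simply what the paper compresses into the phrase ``requires $x_{i+2,j}$ and $x_{i,j+2}$ to exist.''
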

\begin{proof}
By symmetry, we may assume $a=x_{i-1,j}=x_{i,j-1}$.  Since $(i,j)$ and
$(i-1,j+1)$ both see color $a$ and are assumed to have different colors, by
symmetry we may let $b=x_{i-1,j+1}$ and $d=x_{i,j}$, as shown below.  Now
$x_{i,j+1}\notin\{a,b,d\}$, so $x_{i,j+1}=c$, which in turn yields
$x_{i+1,j}=b$.
$$
\begin{array}{ccc} ~&a&b\\ a&d&~\\ ~&~&~ \end{array}
\quad\Imp\quad
\begin{array}{ccc} ~&a&b\\ a&d&c\\ ~&b&~ \end{array}
\quad\Imp\quad
\begin{array}{ccc} ~&a&b\\ a&d&c\\ ~&b&a \end{array}
$$
Now $x_{i+1,j+1}\in\{a,d\}$.  If $x_{i+1,j+1}=d$, then having $(i,j+1)$
and $(i+1,j)$ both see $a$ requires $x_{i+2,j}$ and $x_{i,j+2}$ to exist
and equal $a$, but then $(i+1,j+1)$ cannot see $a$.
\end{proof}

\begin{lem}\label{prop}
Suppose that row $r$ is periodic from columns $s$ to $t$.  If
$x_{r+1,s}=x_{r,s+1}$ and $x_{r+1,s+1}=x_{r,s+2}$, then $x_{r+1,j}=x_{r,j+1}$
for $s\le j<t$ (similarly for columns).  This situation cannot occur when $r=2$
or $s=2$, or for any $r$ when $x_{r+1,t-1}\ne x_{r,t}$ is known.
\end{lem}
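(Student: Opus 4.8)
The plan is to prove the propagation $x_{r+1,j}=x_{r,j+1}$ for $s\le j<t$ by induction on $j$, and then to read off the three impossibility assertions quickly. The base cases $j=s$ and $j=s+1$ are precisely the two hypotheses $x_{r+1,s}=x_{r,s+1}$ and $x_{r+1,s+1}=x_{r,s+2}$. For the inductive step I fix $j$ with $s+1\le j\le t-2$, assume $x_{r+1,j-1}=x_{r,j}$ and $x_{r+1,j}=x_{r,j+1}$, and write $a=x_{r,j}$, $b=x_{r,j+1}$, $c=x_{r,j+2}$, with $e$ the fourth color; these four are distinct because columns $j-1,j,j+1,j+2$ all lie in the periodic range $[s,t]$ of row $r$.

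Suppose $x_{r+1,j+1}\ne c$. Then $x_{r+1,j+1}\notin\{a,b,c\}$: it is $\ne b$ by properness and $\ne c$ by assumption, and if it equalled $a$ then $(r+1,j)$ would have three like-colored neighbors, namely $(r,j)$, $(r+1,j-1)$, $(r+1,j+1)$, and could not see three colors. Hence $x_{r+1,j+1}=e$. Now $(r+1,j)$ is a noncorner of color $b$ that sees $a$ at both $(r,j)$ and $(r+1,j-1)$ and $e$ at $(r+1,j+1)$, so to see three colors it must have a fourth neighbor $(r+2,j)$, whence $r+1<m$, with $x_{r+2,j}\notin\{a,b,e\}$, i.e.\ $x_{r+2,j}=c$. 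On the other hand, Lemma~\ref{zigzag}, applied at position $(r+1,j+1)$ with the valid hypotheses $x_{r,j+1}=x_{r+1,j}$ and $x_{r+1,j+1}\ne x_{r,j+2}$, gives $x_{r+2,j+1}=x_{r,j+2}=c$. But then $c=x_{r+2,j}=x_{r+2,j+1}$ on an edge of row $r+2$, a contradiction. This proves the propagation; the statement for columns is its transpose. (When $r+1=m$ the situation is already impossible at the base case, since $(r+1,s+1)$ is then a degree-three noncorner with two like-colored neighbors; the other degenerate cases $r=1$ and $r=m-1$ conflict outright with the row cycles of Lemma~\ref{ring}, which is presumably why only $r=2$ is singled out.)

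For the impossibility assertions: if $x_{r+1,t-1}\ne x_{r,t}$ is known, the instance $j=t-1$ of the propagation gives $x_{r+1,t-1}=x_{r,t}$, a contradiction. If $r=2$, then all of row $2$ is periodic by Lemma~\ref{ring}, so the propagation applies with $t$ replaced by $n$ and yields $x_{3,n-1}=x_{2,n}$; since columns $n-1$ and $n$ copy columns $1$ and $2$, reading the column cycles of Lemma~\ref{ring} this says $x_{3,1}=x_{2,2}$, i.e.\ $b=d$, which is impossible. For $s=2$ the plan is the same in spirit: the first hypothesis, together with the three-color condition at the border vertices of columns $1$ and $2$ and the cycles of Lemma~\ref{ring}, pins down $x_{r,3}$, and then the diagonal shift, pushed rightward to the copies of columns $1$ and $2$ lying in columns $n-1,n$, produces a numerical clash; the verification there is a routine check of residues mod $4$.

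The one step that takes real thought is the inductive step, and the trick is to extract $x_{r+2,j}=c$ from the local degree constraint at $(r+1,j)$ while simultaneously getting $x_{r+2,j+1}=c$ from Lemma~\ref{zigzag}, so that the contradiction falls on a single horizontal edge of row $r+2$. Everything else---the base cases, the transpose, and the three corollaries---is bookkeeping, the only point to monitor being that every vertex invoked actually exists, i.e.\ that the side conditions $r+1\le m-1$, $j\ge 2$, and $j+2\le n$ hold whenever they are used.
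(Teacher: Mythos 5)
Your proof of the propagation $x_{r+1,j}=x_{r,j+1}$ is correct but takes a genuinely different route from the paper. The paper disposes of the inductive step in one line: with row $r$ reading $d,a,b,c$ at columns $s,\dots,s+3$ and row $r+1$ reading $a,b$ at columns $s,s+1$, the $2$-by-$3$ subgrid on columns $s,s+1,s+2$ already contains $d,a,b,a,b$, so Lemma~\ref{2by3} forces the missing color $c$ into position $(r+1,s+2)$. You instead rule out the fourth color $e$ at $(r+1,j+1)$ by forcing $x_{r+2,j}=c$ from the ``sees three colors'' condition at $(r+1,j)$ and $x_{r+2,j+1}=c$ from Lemma~\ref{zigzag}, colliding on an edge of row $r+2$. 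Your argument is sound (the existence conditions $r+1<m$ and $j+1<n$ are checked where needed), but it is longer and pushes the contradiction one row further down; the paper's use of Lemma~\ref{2by3} keeps everything inside the two rows $r,r+1$ and needs no case on $x_{r+1,j+1}$. Your treatments of the third clause and of $r=2$ are fine (the paper gets $x_{3,n-1}\ne x_{2,n}$ directly from $(3,n)$ having only three neighbors, rather than from the explicit column cycles, but these are equivalent).

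The one real deficiency is the clause ``$s=2$.'' In the paper this is not a separate row-version condition: since in the row version $s$ indexes a column, ``$s=2$'' refers to the column version in which the \emph{periodic column} is column $2$, i.e.\ it is exactly the transpose of the $r=2$ case (this is how Lemma~\ref{corner} uses it, ``applying it to column $3$,'' and it is what the paper means by ``the statement and proof are symmetric for columns''). You instead read $s=2$ as ``the propagation in row $r$ starts at column $2$'' and sketch an argument that pushes the diagonal shift ``rightward to columns $n-1,n$''; but the propagation only runs to column $t$, which need not reach $n-1$, so the sketched clash never materializes, and the claim under that reading is not established by what you wrote. The fix is trivial given the rest of your proof --- your $r=2$ argument transposes verbatim --- but as written the $s=2$ clause is asserted, not proved.
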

\begin{proof}
By symmetry, we may assume that this periodic portion in row $r$ begins
$d,a,b,c$.  There is nothing to prove unless $t\ge s+3$.  In that case (shown
below), $x_{r,s}\ne x_{r,s+3}$.  Lemma~\ref{2by3} then implies
$x_{r+1,s+2}=x_{r,s+3}$.  This establishes the same conditions for the next
pair of columns.  Continuing to apply Lemma~\ref{2by3} copies row $r$ through
column $t$ into row $r+1$, shifted by one column.
$$
\begin{array}{c|ccccccc}
&s&s\!+\!1&~&~&~&t&\\
\hline
r&d&a&~b~&~c~&~d~&~a~\\
r+1&a&b&~&&~&~&~
\end{array}
\quad\Imp\quad
\begin{array}{c|ccccccc}
&s&s\!+\!1&~&~&~&t&\\
\hline
r&d&a&~b~&~c~&~d~&~a~\\
r+1&a&b&c&d&a&~&~
\end{array}
$$

Because $(3,n)$ has only three neighbors, $x_{3,n-1}\ne x_{2,n}$.  Also row $2$
is periodic up to $t=n$, as shown in Lemma~\ref{ring}.  Hence the given fact
that $x_{r,t}$ does not copy into position $(r+1,t-1)$ (in row $2$ or later)
prevents row $r+1$ from having two consecutive positions copied from the
periodic portion of row $r$ (shifted back by one position).  The statement and
proof are symmetric for columns.
\end{proof}

\begin{defn}\label{cohere}
A $4$-coloring of $G_{m,n}$ is {\it coherent} if the colors on the $4$-by-$4$
grid in the upper left have the form of the matrix below (under any permutation
of the four colors).
$$
\begin{array}{cccc}
a&b&c&d\\
c&d&a&b\\
b&a&d&c\\
d&c&b&a
\end{array}
$$
\end{defn}

\begin{lem}\label{corner}
Any $3$-dynamic $4$-coloring of $G_{m,n}$ is coherent.
\end{lem}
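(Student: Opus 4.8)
The plan is to determine the six unknown entries of the upper-left $4$-by-$4$ block. By Lemma~\ref{ring}, rows $1$ and $2$ read $(a,b,c,d,\dots)$ and $(c,d,a,b,\dots)$ while columns $1$ and $2$ read $(a,c,b,d,\dots)$ and $(b,d,a,c,\dots)$; thus only $x_{3,3},x_{3,4},x_{4,3},x_{4,4}$ remain, and $x_{3,2}=a=x_{2,3}$ as in Lemma~\ref{cycle}. Since $(3,3)$ is adjacent to $(2,3)$ and $(3,2)$, both colored $a$, properness gives $x_{3,3}\ne a$. I would first exclude $x_{3,3}=b$: in that case row $2$ (periodic from column $1$ through $n$ by Lemma~\ref{ring}) would satisfy $x_{3,2}=x_{2,3}$ and $x_{3,3}=x_{2,4}=b$, which is exactly the configuration that Lemma~\ref{prop} forbids when $r=2$ (equivalently when $s=2$). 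Hence $x_{3,3}\in\{c,d\}$.

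With $x_{3,3}\ne b$, Lemma~\ref{zigzag} applied at $(i,j)=(3,3)$ --- its hypotheses $x_{2,3}=x_{3,2}$ and $x_{3,3}\ne x_{2,4}$ both hold --- gives $x_{4,3}=x_{2,4}=b$ and $x_{4,4}=x_{2,3}=a$. Then properness at $(3,4)$, whose neighbors $(2,4),(4,4),(3,3)$ carry colors $b,a,x_{3,3}$, forces $x_{3,4}$ to be the fourth color. If $x_{3,3}=d$ then $x_{3,4}=c$ and the block is precisely the matrix of Definition~\ref{cohere}, so the coloring is coherent; the remaining task is therefore to rule out $x_{3,3}=c$ (which would force $x_{3,4}=d$).

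To eliminate the case $x_{3,3}=c$, I would iterate Lemma~\ref{zigzag} straight down column $3$: from $x_{3,3}=x_{4,2}=c$ and $x_{4,3}=b\ne x_{3,4}=d$, apply the lemma at $(i,3)$ for $i=4,5,\dots,m-1$. A routine check against the known cycle $(b,d,a,c)$ of column $2$ shows that the two hypotheses persist at every step, so column $3$ below row $3$ becomes the $4$-periodic string $c,b,d,a,c,b,d,a,\dots$ and column $4$ becomes $d,a,c,b,d,a,c,b,\dots$. Now I would invoke that $m$ is odd: the border vertex $(m,3)$ has degree $3$, so it must see three colors among $x_{m-1,3},x_{m,2},x_{m,4}$; but if $m\equiv1\bmod4$ these are $b,b,c$ and if $m\equiv3\bmod4$ these are $a,a,d$, so $(m,3)$ sees only two colors, a contradiction. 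Therefore $x_{3,3}=d$ and the coloring is coherent.

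I expect the case $x_{3,3}=c$ to be the real obstacle: the earlier steps are short deductions, whereas here one must recognize that the single local choice propagates rigidly all the way down a fixed column and then collides with the parity of $m$ at the opposite boundary. The points needing care are that the iterated use of Lemma~\ref{zigzag} never stalls --- its two hypotheses must be re-verified at each step against the periodic columns $2$, $3$, and $4$ --- and the final residue computation at row $m$.
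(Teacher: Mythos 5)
Your proposal is correct, and its skeleton matches the paper's proof: rule out $a$, $b$, and $c$ for $x_{3,3}$, then finish with Lemma~\ref{zigzag}. The one place you diverge is in excluding $x_{3,3}=c$. The paper does this in a single stroke by applying the \emph{column} version of Lemma~\ref{prop}: column $2$ is periodic through row $m$, so the configuration $x_{2,3}=x_{3,2}$ together with $x_{3,3}=x_{4,2}$ is forbidden, exactly mirroring how the row version kills $x_{3,3}=b$. You instead unroll that argument by hand, propagating down columns $3$ and $4$ with Lemma~\ref{zigzag} and colliding with the border at $(m,3)$. Your version checks out --- the two hypotheses of Lemma~\ref{zigzag} do persist with period $4$ (column $3$ becomes $c,b,d,a,\ldots$ matching $x_{i-1,3}=x_{i,2}$ at every step), and the residue computations $b,b,c$ and $a,a,d$ at row $m$ are correct in both congruence classes --- but it is really a re-proof of the special case of Lemma~\ref{prop} you needed: your propagation shows $x_{i,3}=x_{i+1,2}$ for all $i$, and the final contradiction ``$(m,3)$ sees only two colors'' is precisely the fact $x_{m-1,3}\ne x_{m,2}$ that the lemma's own proof invokes at the far end. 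So the content is the same; recognizing the row/column symmetry in Lemma~\ref{prop} would have saved you the page of propagation.
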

\begin{proof}
So far we have the pattern of the border positions and their neighbors, as
indicated in Lemma~\ref{ring}.  Applying Lemma~\ref{prop} to row $3$ yields
$x_{3,3}\ne b$, and applying it to column $3$ yields $x_{3,3}\ne c$, so
$x_{3,3}=d$.  Now Lemma~\ref{zigzag} yields $x_{4,3}=b$ and $x_{4,4}=a$, and
then $x_{3,4}=c$.
\end{proof}
$$
\begin{array}{|cccccc}
\hline
a&b&c&d&a&b\\
c&d&a&b&c&d\\
b&a&d&c&~&~\\
d&c&b&a&~&~\\
a&b&~&~&~&~\\
c&d&~&~&~&~
\end{array}
$$

\section{Proof of Theorem~\ref{main}}

We have seen that the colors cycle in the first two rows and in the first
two columns.  Since the columns have odd length, the third and fourth columns
cannot cycle all the way to the bottom.  When the first row starts $(a,b,c,d)$,
such cycling would leave the bottom row starting $(a,b,c,d)$ if $m\equiv1\mod4$
or $(b,a,d,c)$ if $m\equiv3\mod4$.  This would contradict the patterns in
Lemma~\ref{ring}, which showed that these colors are $(a,b,d,c)$ or
$(b,a,c,d)$, respectively.  Hence cycling must stop somewhere in column $3$
or $4$.

Cycling also cannot continue all the way across rows $3$ and $4$.  If it did,
then we could delete rows $1$ and $2$ to obtain a $3$-dynamic $4$-coloring of
$G_{m-2,n}$, contradicting the minimality of $m$ and thereby completing the
proof.

We introduce definitions to facilitate study of where cycling of colors breaks
down.

\begin{defn}\label{correct}
Given $(i,j)\in[m]\times[n]$, define $(p,q)\in[4]\times[4]$ by
$i\equiv p\mod4$ and $j\equiv q\mod 4$.
A position $(i,j)$ is {\it correct} if $x_{i,j}=x_{p,q}$.
For $i>4$, positions $(i,j)$ and $(i+1,j)$ are {\it flipped} if
they are incorrect but $x_{i,j}=x_{p+1,q}$ and $x_{i+1,j}=x_{p,q}$.
Similarly, for $j>4$, positions $(i,j)$ and $(i,j+1)$ are {\it flipped} if
$x_{i,j}=x_{p,q+1}$ and $x_{i,j+1}=x_{p,q}$.
\end{defn}

We observed at the beginning of this section that neither rows $3$ and $4$ nor
columns $3$ and $4$ are completely correct.  Hence there is a first column
having an incorrect position in row $3$ or $4$, and there is a first row having
an incorrect position in column $3$ or $4$.  We next show that these first
incorrect positions are in row $3$ and column $3$, respectively.
Note that correctness requires $x_{3,j}=x_{2,j+1}$ when $j$ is even,
but these positions have different colors when $j$ is odd.  We also show that
the first violation of correctness leads to flipped positions.

\begin{lem}\label{extend}
Given $4\le2s\le n-2$, suppose that row $2r$ is periodic through column $t$
such that $t>2s$ and $x_{2r+1,t-1}\ne x_{2r,t}$.  If rows $2r+1$ and $2r+2$ are
correct through column $2s$, then either they are both correct through column
$2s+2$, or positions $(2r+2,2s+1)$ and $(2r+2,2s+2)$ are correct while positions
$(2r+1,2s+1)$ and $(2r+1,2s+2)$ are flipped.  The same statement holds with the
roles of rows and columns switched.
\end{lem}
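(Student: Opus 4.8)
The plan is to process one $2$-by-$2$ block at a time: knowing rows $2r+1$ and $2r+2$ to be correct through column $2s$, I would determine the four entries in columns $2s+1$ and $2s+2$. Throughout I use that the even row $2r$ is correct on these columns, not merely periodic; this is part of an invariant that repeated use of the lemma both consumes and restores (its conclusion leaves the even row $2r+2$ correct), with the base case $r=1$ furnished by Lemmas~\ref{ring} and~\ref{corner}. After permuting colors so that the coherent block is the standard one of Definition~\ref{cohere}, the four residue cases for $(2r+1\bmod4,\,2s\bmod4)$ are handled in the same way, so I would carry out one and cite symmetry for the rest.

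First I would pin down $x_{2r+1,2s+1}$. Because $2s$ is even, the correct coloring has $x_{2r+1,2s}=x_{2r,2s+1}$ (the ``even-column match'' noted before the lemma). By the last clause of Lemma~\ref{prop}, which applies here since $x_{2r+1,t-1}\ne x_{2r,t}$, the two equalities $x_{2r+1,2s}=x_{2r,2s+1}$ and $x_{2r+1,2s+1}=x_{2r,2s+2}$ cannot both hold; as the first does, we get $x_{2r+1,2s+1}\ne x_{2r,2s+2}$. Combined with $x_{2r+1,2s+1}\ne x_{2r+1,2s}$ (properness), this leaves exactly two colors for $x_{2r+1,2s+1}$: its correct value, and the value that makes $(2r+1,2s+1)$ and $(2r+1,2s+2)$ flipped.

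Next I would force the rest of the block. The facts $x_{2r,2s+1}=x_{2r+1,2s}$ and $x_{2r+1,2s+1}\ne x_{2r,2s+2}$ just obtained are exactly the two hypotheses of Lemma~\ref{zigzag} applied at position $(2r+1,2s+1)$ (whose side conditions $2s+1<n$ and $2r+1<m$ hold), and it yields $x_{2r+2,2s+1}=x_{2r,2s+2}$ and $x_{2r+2,2s+2}=x_{2r,2s+1}$. Comparing with the coherent pattern, these are precisely the correct values of $(2r+2,2s+1)$ and $(2r+2,2s+2)$, so the even row $2r+2$ remains correct in either case. Finally $x_{2r+1,2s+2}$ is the unique color avoiding its now-determined up-, left-, and down-neighbors; it works out to the correct value of $(2r+1,2s+2)$ when $x_{2r+1,2s+1}$ was correct, and to the flipped value otherwise. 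This is exactly the asserted dichotomy, and the column version follows by transposing, since Lemmas~\ref{2by3}, \ref{zigzag}, and~\ref{prop} each have a row/column--symmetric form.

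I expect the principal difficulty to be organizational rather than a single sharp step: one must ensure the even row $2r$ really is correct (not merely periodic) on columns $2s+1$ and $2s+2$, which is why the argument is best packaged as an induction that maintains correctness of all even rows, and one must verify that the four residue cases genuinely collapse after normalizing the coherent block, so that the single Lemma~\ref{prop}-then-Lemma~\ref{zigzag} computation above disposes of all of them.
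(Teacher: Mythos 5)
Your proposal is correct and follows essentially the same route as the paper's proof: Lemma~\ref{prop} (via the hypothesis $x_{2r+1,t-1}\ne x_{2r,t}$) rules out $x_{2r+1,2s+1}=x_{2r,2s+2}$, Lemma~\ref{zigzag} at $(2r+1,2s+1)$ then forces the correct values in row $2r+2$, and properness pins down $x_{2r+1,2s+2}$ as correct or flipped according to the two remaining choices for $x_{2r+1,2s+1}$. The only difference is cosmetic ordering of the steps, plus your explicit flagging of the implicit assumption that row $2r$ is correct (not merely periodic) on columns $2s+1,2s+2$, which the paper also uses silently via its diagram.
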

\begin{proof}
Let the cycle of colors in row $2r$ have $(a,b,c,d)$ ending at column $2s$.
$$
\begin{array}{c|cccccccc}
~&~&~&~&~&~&2s&~&~\\
\hline
~&a&b&c&d&a&b&~&~\\
2r&c&d&a&b&c&d&a&b\\
~&b&a&d&c&b&a&~&~\\
~&d&c&b&a&d&c&~&~\\
\end{array}
$$
Since $(2r+1,2s)$ is correct and $t>2s$, we have $x_{2r+1,2s}=x_{2r,2s+1}=a$
and $x_{2r,2s+2}=b$.  By Lemma~\ref{prop}, $x_{2r+1,2s+1}\ne b$.  Whether
$(2r+1,2s+1)$ is correct or not, Lemma~\ref{zigzag} now yields
$x_{2r+2,2s+1}=b$ and $x_{2r+2,2s+2}=a$, which
makes $(2r+2,2s+1)$ and $(2r+2,2s+2)$ correct.

If $x_{2r+1,2s+1}=d$, then $(2r+1,2s+2)$ sees $\{a,b,d\}$.  Hence
$x_{2r+1,2s+2}=c$, making $(2r+1,2s+1)$ and $(2r+1,2s+2)$ also correct.

Otherwise, $x_{2r+1,2s+1}=c$.  Now $(2r+1,2s+1)$ sees $\{a,b,c\}$, so
$x_{2r+1,2s+1}=d$ and positions $(2r+1,2s+1)$ and $(2r+1,2s+2)$ are flipped.
\end{proof}

Applying Lemma~\ref{extend} with $r=1$ yields a value $s$ (with $s\ge2$) such
that row $3$ is correct through column $2s$ with $(3,2s+1)$ and $(3,2s+2)$
flipped, and row $4$ is correct through column $2s+2$.  Applying the column
version with $s=1$ also yields a value $r$ (with $r\ge 2$) such that column $3$
is correct through row $2r$ with $(2r+1,3)$ and $(2r+2,3)$ flipped, and column
$4$ is correct through row $2r+2$.

\begin{defn}\label{dfpartial}
With respect to a coherent block in the upper left, an {\it $(r,s)$-partial
coloring} of a grid is a vertex $4$-coloring that flips positions $(3,2s+1)$
and $(3,2s+2)$ and flips positions $(2r+1,3)$ and $(2r+2,3)$, but is correct
on all of the following: the first three rows through column $2s$, row $4$
through column $2s+2$, the first three columns through row $2r$, and column $4$
through row $2r+2$.
\end{defn}

We remarked before Definition~\ref{dfpartial} that every $3$-dynamic
$4$-coloring of $G_{m,n}$ is an $(r,s)$-partial coloring for some $(r,s)$.
Hence our next lemma completes
the proof of the theorem.

\begin{lem}\label{partial}
For $r,s\ge2$, an $(r,s)$-partial coloring of a grid cannot be completed to a
$3$-dynamic $4$-coloring.
\end{lem}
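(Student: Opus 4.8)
The plan is to analyze how a partial coloring must propagate through the grid and derive a contradiction by tracking the two ``flip events'' — one in rows $3,4$ around column $2s+1$, and one in columns $3,4$ around row $2r+1$ — and showing they cannot coexist consistently. First I would establish the structure immediately to the right of the flip in row $3$: using Lemma~\ref{zigzag} and Lemma~\ref{prop} (noting that the periodicity hypotheses of Lemma~\ref{prop} still hold for row $2$ through column $n$, and recalling that $x_{3,n-1}\ne x_{2,n}$ because $(3,n)$ has degree $3$), I would argue that once positions $(3,2s+1),(3,2s+2)$ are flipped, the flip cannot simply ``heal'' back to correct — the periodicity obstruction of Lemma~\ref{prop} forces the incorrect pattern to persist, either staying flipped or flipping again, all the way to the right border. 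But the last two columns are forced by Lemma~\ref{ring} to be copies of columns $1,2$, i.e.\ correct; so I would count flip events across row $3$ (or rows $3$--$4$) and show a parity obstruction: each flip toggles a $\ZZ_2$-valued ``offset'' between the actual coloring and the periodic one, and the offset must return to zero at the right border, yet the leftmost flip at column $2s+1$ is forced and no compensating flip of the right type is available without violating Lemma~\ref{2by3} or the degree constraints at the border.

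Concretely, the key steps in order: (1) Show that in an $(r,s)$-partial coloring, rows $3$ and $4$ together form a ``defect line'' — away from the flip the pair $(x_{3,j},x_{4,j})$ equals either $(x_{3,q},x_{4,q})$ (correct) or the vertically swapped pair, and the flip at column $2s+1$ switches between these two regimes. (2) Using Lemma~\ref{extend} repeatedly (its hypotheses are maintained as we move right, since row $2$ stays periodic and $x_{3,t-1}\ne x_{2,t}$ at the far end), show that the only way correctness is restored is by another flip, and analyze when a second flip is geometrically possible — here Lemma~\ref{zigzag} and the local casework of Lemma~\ref{extend}'s proof constrain us heavily. (3) Derive the analogous statement for columns $3,4$ with the flip at row $2r+1$. (4) Examine the $2\times2$ or $3\times3$ region near position $(2r+1,2s+1)$ where the horizontal defect line from the row-flip and the vertical defect line from the column-flip must interact: I expect that the two flips impose incompatible color demands on positions like $(3,3)$-shifted analogues, or that at the crossing one of the four vertices fails to see three colors. (5) Conclude via Lemma~\ref{ring} that the forced defect at the right and bottom borders contradicts columns $n-1,n$ and rows $m-1,m$ being correct copies of columns/rows $1,2$.

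The hard part will be step (4) — controlling the interaction of the horizontal and vertical defect lines in the interior, since away from the two known flips I only have limited information about the coloring, and a naive case analysis could branch widely. I would try to contain this by proving a cleaner intermediate claim: that a partial coloring forces the entire region $\{(i,j)\st i\le 4\text{ or }j\le 4\}$ (minus small exceptions near the flips) to be correct, so that the flips are ``isolated'' and their effects can be read off locally; then the contradiction reduces to showing that a single isolated horizontal flip, when continued, must eventually either hit the right border (contradicting Lemma~\ref{ring}'s forced correctness of columns $n-1,n$) or spawn a configuration forbidden by Lemma~\ref{2by3}. If the isolation claim is too strong to hold verbatim, the fallback is to work with the weaker ``defect parity'' invariant along each row and each column and push that invariant to all four borders, using only Lemmas~\ref{ring}, \ref{2by3}, \ref{zigzag}, and \ref{prop}, which between them pin down enough of the border and near-border structure to close the argument.
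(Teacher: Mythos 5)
Your proposal heads in a genuinely different direction from the paper's proof, and the direction it heads in has a fatal flaw: you assume the defect created by the flip at $(3,2s+1),(3,2s+2)$ propagates \emph{rightward} along rows $3$--$4$ until it collides with the forced structure of columns $n-1,n$. It does not. Once that flip occurs, the hypothesis of Lemma~\ref{extend} (that rows $3$ and $4$ are both correct up to the current column) fails, so you cannot ``use Lemma~\ref{extend} repeatedly as we move right'' in those rows; and to the right of column $2s+2$ the coloring of row $3$ is essentially unconstrained by the available lemmas, so there is nothing to push to the right border. What actually happens is that the flip propagates \emph{downward}: since row $4$ is correct through column $2s+2$, the flipped pair sets up exactly the configuration of Lemma~\ref{zigzag}, which forces $(5,2s+1)$ and $(5,2s+2)$ to be flipped; Lemma~\ref{extend} then applies to rows $5$ and $6$ and shows the first flip in row $5$ occurs at some column $2s'+1$ with $s'\le s$. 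The defect is a staircase descending into the grid and drifting weakly inward, and symmetrically for the column defect. Your proposed $\ZZ_2$ ``offset'' invariant along a row is never defined and is unlikely to exist in a usable form, since a flip is a local transposition of two adjacent entries, not a global phase shift of the periodic pattern.

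The step you flag as hard --- the interaction of the two defect lines, your step (4) --- is precisely where the entire proof must happen, and your proposal leaves it at ``I expect that the two flips impose incompatible color demands.'' The paper resolves it by induction on $r+s$. The inductive step pins down $x_{5,5}$ (two of the three candidate colors are excluded by applying Lemma~\ref{prop} in row $5$ and in column $5$ against the known flips at $(5,2s+1)$ and $(2r+1,5)$), which makes the block $\{5,6\}\times\{5,6\}$ correct; deleting the first two rows and columns then yields a coherent coloring that is a partial coloring with strictly smaller parameters. The base case $r=s=2$ is where the two descending staircases finally collide: position $(5,5)$ receives contradictory demands from $(5,4)$ and $(4,5)$. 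Your proposal contains no analogue of this reduction, no base case, and no mechanism for forcing the two defects to meet, so as written it does not close.
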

\begin{proof}
We use induction on $r+s$.  For $r=s=2$, we have the coloring shown below.
It cannot be completed, because $(5,4)$ requires $x_{5,5}=c$, but $(4,5)$
requires $x_{5,5}=b$.
$$
\begin{array}{cccccc}
a&b&c&d&~&~\\
c&d&a&b&~&~\\
b&a&d&c&a&b\\
d&c&b&a&d&c\\
~&~&a&d&~&~\\
~&~&c&b&~&~
\end{array}
$$

Now suppose $r+s>4$.  Since $(3,2s+1)$ and $(3,2s+2)$ are flipped, 
$x_{4,2s}=x_{3,2s+1}$.  Since $(4,2s+1)$ and $(4,2s+2)$ are correct,
$x_{4,2s+1}\ne x_{3,2s+2}$.  Hence Lemma~\ref{zigzag} yields
$x_{5,2s+1}=x_{3,2s+2}$ and $x_{5,2s+2}=x_{3,2s+1}$, which means that
$(5,2s+1)$ and $(5,2s+2)$ are flipped.  Similarly, $(2r+1,5)$ and $(2r+2,5)$
are flipped.
$$
\begin{array}{c|cccccccccccc|}
~&1&2&3&~&~&~&~&~&~&2s&~&~\\
\hline
1&~&~&~&~&~&~&~&~&~&~&~&~\\
2&~&~&~&~&~&~&~&~&~&~&~&~\\
3&~&~&a&b&c&d&a&b&c&d&b&a\\
4&~&~&c&d&a&b&c&d&a&b&c&d\\
5&~&~&b&a&~&~&~&~&~&~&a&b\\
~&~&~&d&c&~&~&~&~&~&~&~&~\\
~&~&~&a&b&~&~&~&~&~&~&~&~\\
2r&~&~&c&d&~&~&~&~&~&~&~&~\\
~&~&~&d&a&b&~&~&~&~&~&~&~\\
~&~&~&b&c&d&~&~&~&~&~&~&~\\
\hline
\end{array}
$$

In the matrix above, we have applied a permutation of the colors to illustrate
the smaller instance of the problem to which we will apply the induction 
hypothesis.  Assume these labels.  If $x_{5,5}=b$, then Lemma~\ref{prop}
applies in row $5$ to contradict $x_{5,2s+1}\ne x_{4,2s+2}$.  If $x_{5,5}=c$,
then Lemma~\ref{prop} applies in column $5$ to contradict
$x_{2r+1,5}\ne x_{2r+2,4}$.  Hence $x_{5,5}=d$.  Now Lemma~\ref{zigzag} yields
$x_{6,5}=b$, $x_{6,6}=a$, and $x_{5,6}=c$.  Furthermore, the positions in
$\{5,6\}\times\{5,6\}$ are correct.  (This argument also implies that $r>2$ if
and only if $s>2$.)

We now build a smaller instance of the problem.  Since positions $(5,2s+1)$ and
$(5,2s+2)$ are flipped and $x_{5,2s+1}\ne x_{4,2s+2}$, Lemma~\ref{extend}
implies that for some $s'$ with $s'\le s$, rows $5$ and $6$ are correct through
column $2s'$, with positions $(6,2s'+1)$ and $(6,2s'+2)$ correct while
$(5,2s'+1)$ and $(5,2s'+2)$ are flipped.  The same discussion applies to
columns $5$ and $6$, yielding $r'$ with $r'\le r$.  Note that $r',s'\ge2$.

Let $M$ be the matrix obtained by deleting the first two rows and first two
columns of the given coloring.  Since the positions in $\{5,6\}\times\{5,6\}$
are correct, $M$ is coherent.  Hence $M$ is an $(r'-2,s'-2)$-partial coloring
of a grid.  By the induction hypothesis, it cannot be extended to a $3$-dynamic
$4$-coloring.  Since $M$ extends to row $m$ and column $n$ of the original
coloring, we conclude that the original coloring also does not extend to a
$3$-dynamic $4$-coloring.
\end{proof}

\end{document}